\theoremstyle{definition}
\newtheorem{thm}{Theorem}[section]
\newtheorem{prop}[thm]{Proposition}
\newtheorem{defi}[thm]{Definition}
\newtheorem{rem}[thm]{Remark}
\newtheorem{note}[thm]{Notation}
\newtheorem{para}[thm]{}
\DeclareMathOperator{\p3}{\mathbb{P}^3}
\DeclareMathOperator{\pn}{\mathbb{P}^{2n+1}}
\DeclareMathOperator{\N}{\mathcal{N}}
\DeclareMathOperator{\T}{\mathcal{T}}
\DeclareMathOperator{\I}{\mathcal{I}}
\DeclareMathOperator{\mo}{\mathcal{O}}
\newcommand{\mb}[1]{\mathbb{#1}}
\newcommand{\mc}[1]{\mathcal{#1}}
\title{Semi-regular varieties and variational Hodge conjecture}
\author{Ananyo Dan \thanks{The author has been supported by the DFG under Grant KL-$2244/2-1$} \, \,  Inder Kaur \thanks{The author has been supported by Berlin Mathematical School
\\Mathematics Subject Classification($2010$): $14$C$30$, $14$D$07$, $32$G$20$, $13$D$10$\\Keywords: Noether-Lefschetz locus, Hodge theory, hypersurfaces, algebraic cycles, semi-regularity, variational Hodge conjecture, deformation theory}}
\date{\today}
\begin{document}
\maketitle

\selectlanguage{french} 
\begin{abstract}
D'apr\`{e}s \cite{b1,fl} nous savons que sous-vari\'{e}t\'{e}s semi-r\'{e}guli\`{e}rs satisfaisent la conjecture de Hodge variationnelle, c'est-\`{a}-dire, donn\'{e} une famille de vari\'{e}t\'{e}s projectives, lisses $ \pi: \mc{X} \to B $, une fibre sp\'{e}ciale $ \mc{X}_o$ et un semi-r\'{e}guli\`{e}re
  sous-vari\'{e}t\'{e} $ Z \subset \mc{X}_o$, la classe de cohomologie correspondant \`{a} $ Z $ reste une classe Hodge (comme $ \mc{X}_o $ d\'{e}forme le long $B$) si et seulement si $ Z $ reste un le cycle alg\'{e}brique. Dans cet article, nous \'{e}tudions des exemples de tels sous-vari\'{e}t\'{e}s.
  En particulier, nous prouvons que toute lisse vari\'{e}t\'{e} projective $ Z $ de dimension $ n $ est une sous-vari\'{e}t\'{e} semi-r\'{e}guli\`{e}re d'une hypersurface projective lisse dans  $ \mb{P}^{2n + 1} $ du grand degr\'{e} suffisant.
\end{abstract}
\selectlanguage{english} 
\begin{abstract}
 Following \cite{b1, fl} we know that semi-regular sub-varieties satisfy the variational Hodge conjecture i.e., given a family of smooth projective varieties $\pi:\mc{X} \to B$, a special fiber $\mc{X}_o$ and a semi-regular 
 subvariety $Z \subset \mc{X}_o$, the cohomology class corresponding to $Z$ remains a Hodge class (as $\mc{X}_o$ deforms along $B$) if and only if $Z$ remains an algebraic cycle. In this article, we investigate examples of such sub-varieties.
 In particular, we prove that any smooth projective variety $Z$ of dimension $n$ is a semi-regular sub-variety of a smooth projective hypersurface in $\mb{P}^{2n+1}$ of large enough degree.
\end{abstract}

\section{Introduction}
The aim of this article is to study examples of semi-regular varieties.
The semi-regularity for a curve on a surface was first introduced by \cite{sev}. This was later generalized
to arbitrary divisors on a complex manifold by Kodaira-Spencer in \cite{kod1}. In \cite{b1}, Bloch  extended the notion
to cycles corresponding to  local complete intersection subschemes. This was further generalized
by Buchweitz and Flenner in \cite{fl}.

One of the motivations for the study of semi-regular varieties comes from the variational Hodge conjecture, namely
these varieties satisfy the variational Hodge conjecture. In particular, Bloch in \cite{b1} and Buchweitz and Flenner in \cite{fl}
noticed that for a smooth projective variety $X$ and a semi-regular local complete intersection subscheme $Z$
in $X$, any infinitesimal deformation of $X$ lifts the cohomology class of $Z$ (which is a Hodge class)
to a Hodge class if and only if $Z$ lifts to a local complete intersection subscheme (in the deformed scheme).

In the case of a smooth hypersurface $X$ in $\p3$, an effective divisor $C$ in $X$ is said to be \emph{semi-regular}
if $h^1(\mo_X(C))=0$. If $C$ is smooth and $\deg(X)>\deg(C)+4$ then Serre duality implies that $h^1(\mo_X(C))=h^1(\mo_X(-C)(d-4))$ which is equal to zero because the Castelnuovo-Mumford regularity of $C$ is at most $\deg(C)$.
 But the description of the semi-regularity for subschemes which are not divisors is more complicated, as we see below in \S \ref{sem4}. The main result of this article generalizes the above case of divisors
 to higher codimension subvarieties (see \S \ref{sem7}). In particular, we prove

\begin{thm}\label{sem1}
 Let $Z$ be a smooth subscheme in $\pn$ of codimension $n+1$. Then for $d \gg 0$, there exists a smooth degree
 $d$ hypersurface in $\pn$ containing $Z$ such that $Z$ is semi-regular in $X$.
\end{thm}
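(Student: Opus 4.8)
The plan is to take for $X$ a \emph{general} hypersurface of degree $d$ in $\pn$ containing $Z$ and to verify semi-regularity through Bloch's semi-regularity map. As $X$ is smooth, $Z$ is a local complete intersection of codimension $p=n$ in the $2n$-dimensional variety $X$, and (following \cite{b1,fl}, cf.\ \S\ref{sem4}) $Z$ is semi-regular in $X$ precisely when
\[
\pi_Z\colon H^1(Z,N_{Z/X})\longrightarrow H^{n+1}(X,\Omega^{n-1}_X)
\]
is injective. By Serre duality on $X$ and on $Z$ this is equivalent to surjectivity of the dual map
\[
\rho_Z\colon H^{n-1}(X,\Omega^{n+1}_X)\longrightarrow H^{n-1}\!\big(Z,\,N^{\vee}_{Z/X}\otimes\omega_Z\big),
\]
and $\rho_Z$ may be identified (cf.\ \S\ref{sem4}) with the composite of the restriction $H^{n-1}(X,\Omega^{n+1}_X)\to H^{n-1}(Z,\Omega^{n+1}_X|_Z)$ with the map induced by the natural quotient $\Omega^{n+1}_X|_Z\twoheadrightarrow N^{\vee}_{Z/X}\otimes\omega_Z$ onto the smallest nonzero step of the conormal filtration of $\Omega^{n+1}_X|_Z$. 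One first checks that for $d\gg0$ a general $X\in|\mathcal I_{Z/\pn}(d)|$ is smooth: for $d\gg0$ the sheaf $\mathcal I_{Z/\pn}(d)$ is globally generated with base scheme $Z$, so a general such $X$ is smooth away from $Z$ by Bertini, while along $Z$ the singular locus of $X=\{F=0\}$ is the zero scheme of the first-order part $[F]_Z\in H^0(Z,N^{\vee}_{Z/\pn}(d))$, and for general $F$ this is a nowhere-vanishing section since $N^{\vee}_{Z/\pn}(d)$ is globally generated of rank $n+1>n=\dim Z$.

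It then remains to establish, for $d\gg0$ and general $X$, two vanishings. First, $H^{n}(X,\mathcal I_{Z/X}\otimes\Omega^{n+1}_X)=0$: combined with $H^{n}(X,\Omega^{n+1}_X)=0$ — which holds because $H^{2n+1}(X)=0$ for a smooth hypersurface in $\pn$ by the Lefschetz theorems — this makes the restriction map above surjective. Second, $H^{n}\!\big(Z,\wedge^{a}N^{\vee}_{Z/X}\otimes\Omega^{n+1-a}_Z\big)=0$ for $2\le a\le n$: since the kernel of $\Omega^{n+1}_X|_Z\twoheadrightarrow N^{\vee}_{Z/X}\otimes\omega_Z$ is filtered by exactly these bundles, this makes the projection surjective on $H^{n-1}$, whence $\rho_Z$ surjective. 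The first vanishing is obtained by resolving $\Omega^{n+1}_X$ through the bundles $\Omega^{j}_{\pn}|_X\big(-(n+1-j)d\big)$ (the conormal sequence and its exterior powers form a complex of vector bundles, so it stays exact after $\otimes\,\mathcal I_{Z/X}$) and then, via $0\to\mathcal O_{\pn}(-d)\to\mathcal I_{Z/\pn}\to\mathcal I_{Z/X}\to0$ and $0\to\mathcal I_{Z/\pn}\to\mathcal O_{\pn}\to\mathcal O_Z\to0$, reducing to groups $H^{i}(\pn,\Omega^{j}_{\pn}(k))$ (Bott's formula), $H^{i}(\pn,\mathcal I_{Z/\pn}(k))$ with $k\gg0$, and $H^{i}(Z,\mathcal F(k))$ with $\mathcal F$ fixed and $k\gg0$ (Serre vanishing); this is a routine, if lengthy, bookkeeping.

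The heart of the matter is the second vanishing, and here generality of $X$ is needed. By Serre duality on $Z$ it is equivalent to $H^{0}(Z,\wedge^{a}N_{Z/X}\otimes E_a)=0$, where $E_a:=\wedge^{n+1-a}T_Z\otimes\omega_Z$ is independent of $d$. Viewing $\tau:=[F]_Z$ as a surjection $N_{Z/\pn}\twoheadrightarrow\mathcal O_Z(d)$ with kernel $N_{Z/X}$, the Koszul complex of $\tau$ gives an exact sequence $0\to\wedge^{a}N_{Z/X}\to\wedge^{a}N_{Z/\pn}\xrightarrow{\,c_\tau\,}\wedge^{a-1}N_{Z/\pn}\otimes\mathcal O_Z(d)$; tensoring by $E_a$ and taking sections identifies $H^{0}(Z,\wedge^{a}N_{Z/X}\otimes E_a)$ with $\ker\!\big(c_\tau\colon V_a\to H^{0}(Z,\wedge^{a-1}N_{Z/\pn}\otimes E_a(d))\big)$, where $V_a:=H^{0}(Z,\wedge^{a}N_{Z/\pn}\otimes E_a)$ is a fixed finite-dimensional space. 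For $d\gg0$ the map $H^{0}(\pn,\mathcal I_{Z/\pn}(d))\to H^{0}(Z,N^{\vee}_{Z/\pn}(d))$ is surjective (Serre vanishing for $\mathcal I_{Z/\pn}^{2}$), so $\tau$ is unconstrained; and for fixed $0\ne\sigma\in V_a$ the set of $F$ with $c_{[F]_Z}(\sigma)=0$ is the kernel of the linear map $F\mapsto c_{[F]_Z}(\sigma)$ from $H^{0}(\pn,\mathcal I_{Z/\pn}(d))$ to $H^{0}(Z,\wedge^{a-1}N_{Z/\pn}\otimes E_a(d))$, whose rank — for $d\gg0$ — equals $h^{0}$ of a positive-rank subsheaf of $\wedge^{a-1}N_{Z/\pn}\otimes E_a(d)$ and hence grows like $d^{n}$, uniformly over the bounded space $\mathbb P(V_a)$. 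Since $\dim\mathbb P(V_a)$ is bounded, for $d\gg0$ the corresponding incidence variety in $\mathbb P\big(H^{0}(\pn,\mathcal I_{Z/\pn}(d))\big)\times\mathbb P(V_a)$ does not dominate the first factor, so a general $X$ makes $c_\tau$ injective on $V_a$ for every $a$ simultaneously; this is the second vanishing.

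I expect this last step to be the main obstacle: the crux is exactly that contraction by the first-order part of a general defining equation is injective on each of the fixed spaces $V_a$, which forces one to combine the surjectivity of $H^{0}(\pn,\mathcal I_{Z/\pn}(d))\to H^{0}(Z,N^{\vee}_{Z/\pn}(d))$ with a dimension count. The construction of $\rho_Z$ and its identification as ``restrict to $Z$ and keep the normal part'', together with the vanishing in the first step, should be routine by comparison. It is also worth noting that the hypothesis that the ambient space is $\pn$ — rather than some $\mathbb P^{N}$ — is precisely what places $H^{n+1}(X,\Omega^{n-1}_X)$ in the middle cohomology of $X$, so that Serre duality and the vanishing $H^{2n+1}(X)=0$ are available and the target is the primitive class space $H^{n+1,n-1}(X)$ (describable via Griffiths residues if one wishes).
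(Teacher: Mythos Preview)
Your outline is correct, but the route is more laborious than the paper's and yields a weaker conclusion. Both arguments reduce to the same vanishing --- that of $H^n$ of the intermediate graded pieces of the (co)normal filtration of $\Omega^{n+1}_X|_Z$ --- but you Serre-dualize these to $H^0$ groups carrying no positive twist and then run a genericity/incidence argument to show that contraction by a \emph{general} defining equation is injective on the fixed spaces $V_a$. The paper instead writes the same graded pieces in the tangent-side form $\bigwedge^{p}\T_Z \otimes \bigwedge^{n-1-p}\N_{Z|X}$ tensored with $K_X|_Z$ (via $\Omega^{n+1}_X\cong\bigwedge^{n-1}\T_X\otimes K_X$); the twist by $K_X$ is large and positive, and the exterior-power short exact sequences arising from $0 \to \N_{Z|X} \to \N_{Z|\pn} \to \mo_Z(d) \to 0$ --- whose middle terms $\bigwedge^{k}\N_{Z|\pn}$ are \emph{fixed} bundles on $Z$, independent of $X$ --- give a short recursion that collapses everything to ordinary Serre vanishing on $Z$ (this is Proposition~\ref{sem2}). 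That argument works for \emph{every} smooth $X\supset Z$ of sufficiently large degree, with no dimension count and no uniformity bookkeeping. Your additional first step, the surjectivity of the restriction $H^{n-1}(X,\Omega^{n+1}_X)\to H^{n-1}(Z,\Omega^{n+1}_X|_Z)$, is a point the paper does not explicitly treat.
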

 We finally observe in Remark \ref{sem3} that for such a choice of $Z$ and $X$, the cohomology class of $Z$ in $H^{n,n}(X,\mb{Z})$ satisfies the variational Hodge conjecture for a family of degree $d$ hypersurfaces in $\pn$
 with a special fiber $X$.

\section{Bloch's Semi-regularity map}\label{sem4}

 \begin{para}\label{ph08}
  In \cite{b1}, Bloch generalizes the above definition of semi-regularity for divisors to any local complete intersection subscheme in a smooth projective variety over an algebraically closed field. We briefly recall the definition.
  Let $X$ be a smooth  projective variety of dimension $n$ and $Z$ be a local complete intersection subscheme in $X$ of codimension $q$. 
 Consider the composition morphism \[\Omega_X^{n-q+1} \times \bigwedge^{q-1} \N_{Z|X}^\vee \xrightarrow{1 \times \bigwedge^{q-1}\bar{d}} \Omega_X^{n-q+1} \times \Omega_X^{q-1} \otimes \mo_Z \xrightarrow{\bigwedge} K_X \otimes \mo_Z\]
  where \[\bar{d}:\N_{Z|X}^\vee \cong \I_{Z|X}/\I_{Z|X}^2 \to \Omega^1_X \otimes \mo_Z\]is the map induced by the differential $d:\I_{Z|X} \to \Omega^1_X$, with $\I_{Z|X}$ denoting the ideal sheaf of $Z$ in $X$.
  By adjunction, this induces a map, \[\Omega_X^{n-q+1} \to \bigwedge^{q-1}\N_{Z|X} \otimes K_X \cong \N_{Z|X}^\vee \otimes K_Z^0,\]where $K_Z^0:=\bigwedge^{q}\N_{Z|X} \otimes K_X$ is the \emph{dualizing sheaf}.
  Dualizing the induced map in cohomology, 
  \[H^{n-q-1}(X,\Omega^{n-q+1}) \to H^{n-q-1}(Z,\N_{Z|X}^\vee \otimes K_Z^0), \mbox{ gives us } \pi:H^1(\N_{Z|X}) \to H^{q+1}(X,\Omega_X^{q-1}).\]
  \end{para}

  \begin{defi}
    The map $\pi$ is called the \emph{semi-regularity map} and if it is injective we say that $Z$ is \emph{semi-regular}.
  \end{defi}

  \section{Proof of Theorem \ref{sem1} and an application}\label{sem7}
  
\begin{para}
 Before we come to the final result of this article we recall a result by Kleiman and Altman which tells us given a smooth subscheme in $\pn$ of codimension $n+1$ there exist a \emph{smooth} 
 hypersurface in $\pn$ containing it.
\end{para}

\begin{note}
Let $Z$ be a projective subscheme in $\pn$. Denote by \[Z_e:=\{z \in Z| \dim \Omega^1_{Z,z}=e\}.\]
 \begin{thm}[{\cite[Theorem $7$]{kleim}}]\label{exi1}
  If for any $e>0$ such that $Z_e \not= \emptyset$ we have that $\dim Z_e+e$ is less than $2n+1$ then there exists a smooth hypersurface in $\pn$ 
  containing $Z$. Moreover, if $Z$ is $d-1$-regular (in the sense of Castelnuovo-Mumford) then
 there exists a smooth degree $d$ such hypersurface containing $Z$.
 \end{thm}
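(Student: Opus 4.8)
The plan is to prove the statement by the incidence-variety (Bertini-type) method: show that a \emph{general} member of the linear system $\Lambda:=|\I_Z(d)|=\mb{P}\big(H^0(\pn,\I_Z(d))\big)$ is smooth, once $d$ is large enough that $\I_Z(d)$ is globally generated. Write $N:=2n+1$ and $V:=H^0(\pn,\I_Z(d))$, so $\Lambda=\mb{P}(V)$, and let $\I_Z$ denote the ideal sheaf of $Z$ in $\pn$. A hypersurface $X=V(F)$ with $F\in V$ is singular at a point $x$ precisely when $F$ lies in $\mathfrak{m}_x^2$ locally at $x$. I would split the analysis into points off $Z$ and points on $Z$. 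Away from $Z$ the sheaf $\I_Z(d)$ agrees with $\mo_{\pn}(d)$, and for $d$ large the system separates $1$-jets there, so the locus of $X\in\Lambda$ singular at some point of $\pn\setminus Z$ is a proper closed subset by the classical count (the singular condition at $x\notin Z$ is $N+1$ linear conditions, against a base of dimension $\le N$). The real content is controlling singularities along $Z$ itself, where every $F\in V$ already vanishes.

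So fix $x\in Z$ with $\dim\Omega^1_{Z,x}=e$, i.e.\ $x\in Z_e$; here $e$ is the embedding dimension of $Z$ at $x$, equivalently $\dim_{k(x)}\mathfrak{m}_{Z,x}/\mathfrak{m}_{Z,x}^2$. The forms in $V$ that are singular at $x$ are exactly $H^0\big((\I_Z\cap\mathfrak{m}_x^2)(d)\big)$. I would compute its codimension in $V$ from the exact sequence
\[0 \to (\I_Z\cap\mathfrak{m}_x^2)(d) \to \I_Z(d) \to \big(\I_Z/(\I_Z\cap\mathfrak{m}_x^2)\big)(d) \to 0,\]
observing that the skyscraper quotient on the right is the image of $\I_{Z,x}$ in $\mathfrak{m}_x/\mathfrak{m}_x^2$, whose cokernel is $\mathfrak{m}_{Z,x}/\mathfrak{m}_{Z,x}^2$ of dimension $e$, so that the image has length $N-e$. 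Provided the evaluation map $V\to\big(\I_Z/(\I_Z\cap\mathfrak{m}_x^2)\big)(d)$ is surjective — this is where the regularity hypothesis enters, through the vanishing $H^1\big((\I_Z\cap\mathfrak{m}_x^2)(d)\big)=0$ — the bad subspace has codimension exactly $N-e$ in $V$.

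Next I would assemble the incidence variety
\[\Sigma:=\{(x,[F])\in Z\times\Lambda \mid V(F)\ \text{is singular at}\ x\},\]
stratify $Z=\bigsqcup_e Z_e$, and run the dimension count. Over $x\in Z_e$ the fibre of $\Sigma\to Z$ is $\mb{P}\,H^0\big((\I_Z\cap\mathfrak{m}_x^2)(d)\big)$, of codimension $N-e$ in $\Lambda$, so $\dim\big(\Sigma\cap(Z_e\times\Lambda)\big)=\dim Z_e+\dim\Lambda-(N-e)$. Projecting to $\Lambda$, the image has dimension at most $\dim\Lambda-\big(N-(\dim Z_e+e)\big)$, which is strictly less than $\dim\Lambda$ exactly when $\dim Z_e+e<N=2n+1$, precisely the hypothesis (the stratum $e=0$ is automatic, $Z_0$ being finite). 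As there are only finitely many nonempty strata, the union of these images, together with the proper off-$Z$ locus from the first step, is a proper closed subset of $\Lambda$; a general $X\in\Lambda$ therefore avoids it, is smooth everywhere, and contains $Z$. For the ``moreover'' part one checks that $(d-1)$-regularity of $Z$ supplies both the global generation used off $Z$ and the $H^1$-vanishing used on $Z$, so the argument runs for that particular $d$.

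The step I expect to be the main obstacle is the uniform surjectivity of the jet-evaluation map $V\to\big(\I_Z/(\I_Z\cap\mathfrak{m}_x^2)\big)(d)$ over all $x\in Z$ simultaneously, i.e.\ extracting the vanishing $H^1\big((\I_Z\cap\mathfrak{m}_x^2)(d)\big)=0$ from the regularity hypothesis in a way that is uniform in $x$; this is also the point that forces the characteristic-free formulation in terms of $\dim\Omega^1_{Z,z}$ rather than smoothness, since in positive characteristic one cannot simply appeal to generic smoothness of the linear system. A secondary technical point is ensuring the strata $Z_e$ are genuinely locally closed, via upper semicontinuity of the fibre dimension of $\Omega^1_Z$, so that the dimension count is legitimate.
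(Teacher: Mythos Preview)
The paper does not prove this statement at all: Theorem~\ref{exi1} is quoted verbatim from Kleiman--Altman \cite[Theorem~7]{kleim} and used as a black box to guarantee the existence of a smooth hypersurface $X\supset Z$ in the proof of Theorem~\ref{sem1}. There is therefore nothing to compare your argument against in this paper.

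That said, your sketch is essentially the Kleiman--Altman incidence-variety argument, and the dimension count is set up correctly: at $x\in Z_e$ the image of $\I_{Z,x}$ in $\mathfrak m_x/\mathfrak m_x^2$ has length $N-e$, so the ``singular at $x$'' condition cuts out a linear subspace of $V$ of codimension $N-e$, and the stratumwise estimate $\dim Z_e+e<N$ is exactly what forces the projection to $\Lambda$ to miss a general point. Your identification of the two technical inputs---uniform surjectivity of the jet map along $Z$ (supplied by the $(d-1)$-regularity hypothesis via $H^1(\I_Z\otimes\mathfrak m_x^2(d))=0$), and constructibility of the strata $Z_e$ via upper semicontinuity of fibre dimension of $\Omega^1_Z$---is accurate; these are precisely the points Kleiman--Altman handle with care to make the argument work in arbitrary characteristic. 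One small refinement: off $Z$ you do not quite need $\I_Z(d)$ to separate $1$-jets globally, only that $V\to\mo_{\pn}/\mathfrak m_x^2$ is surjective for $x\notin Z$, which again follows from the regularity hypothesis.
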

\end{note}

 We need the following proposition:
 \begin{prop}\label{sem2}
  Let $Z$ be a smooth subscheme in $\pn$ of codimension $n+1$ and $X$ be a smooth degree $d$ hypersurface in $\pn$
  containing $Z$ for some $d \gg 0$. Then, for any integers $2 \le i <n$, $h^n\left(\bigwedge^{i-1}\T_Z \otimes \bigwedge^{n-i}\N_{Z|X}(d-4)\right)=0$.
 \end{prop}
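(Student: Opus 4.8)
The plan is to reduce the statement --- a top-degree vanishing on $Z$, since $\dim Z=n$ and $\N_{Z|X}$ is locally free of rank $n$ --- to Serre vanishing for a finite list of vector bundles on $Z$ not involving $X$, by trading $\N_{Z|X}$ for the $X$-independent bundle $\N_{Z|\pn}$ at the cost of boosting twists by multiples of $d$. First I would record the exact sequences. Since $X$ is smooth and $Z\subset X$ is smooth, the immersions $Z\hookrightarrow X\hookrightarrow\pn$ are regular, so $0\to\N_{Z|X}\to\N_{Z|\pn}\to\N_{X|\pn}|_Z\to0$ is exact and $\N_{X|\pn}|_Z\cong\mo_Z(d)$. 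As the quotient is a line bundle, for every $k\ge1$ the $k$-th exterior power yields a short exact sequence
\[ 0\to\bigwedge\nolimits^{k}\N_{Z|X}\to\bigwedge\nolimits^{k}\N_{Z|\pn}\to\bigwedge\nolimits^{k-1}\N_{Z|X}(d)\to0 \]
(locally split, hence globally exact).

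Next, fix $i$ with $2\le i<n$, and for $0\le k\le n-i$ set $M_k:=\bigwedge^{i-1}\T_Z\otimes\bigwedge^{k}\N_{Z|X}\big((n-i-k+1)d-4\big)$ --- so $M_{n-i}$ is the sheaf in the proposition --- and let $P_k$ be the same sheaf with $\N_{Z|\pn}$ replacing $\N_{Z|X}$. Tensoring the displayed sequence by $\bigwedge^{i-1}\T_Z\otimes\mo_Z\big((n-i-k+1)d-4\big)$ gives $0\to M_k\to P_k\to M_{k-1}\to0$. I would then prove $H^{i+k}(Z,M_k)=0$ for $d\gg0$ by induction on $k$; the case $k=n-i$ is the proposition. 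The base case $k=0$ reads $H^{i}\big(Z,\bigwedge^{i-1}\T_Z((n-i+1)d-4)\big)=0$, which holds for $d\gg0$ by Serre vanishing (here $i>0$ and the twist tends to $+\infty$). For the inductive step, the long exact cohomology sequence of $0\to M_k\to P_k\to M_{k-1}\to0$ gives $H^{i+k-1}(M_{k-1})\to H^{i+k}(M_k)\to H^{i+k}(P_k)$; the left term vanishes by induction, and the right term vanishes for $d\gg0$ by Serre vanishing, since $P_k$ is the fixed bundle $\bigwedge^{i-1}\T_Z\otimes\bigwedge^{k}\N_{Z|\pn}$, which depends only on $Z\subset\pn$, twisted by $\mo_Z\big((n-i-k+1)d-4\big)$ whose degree grows linearly in $d$ (as $n-i-k+1\ge1$), and $i+k>0$.

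As $i$ runs over $2\le i<n$ and $k$ over $0\le k\le n-i$, only finitely many of the bundles $\bigwedge^{i-1}\T_Z$, $\bigwedge^{k}\N_{Z|\pn}$ occur, so a single bound $d\gg0$ (also large enough that Theorem \ref{exi1} produces a smooth $X\supset Z$) works uniformly in $i$ and in $X$. I do not expect a serious obstacle: the pivotal step --- and the only place ``$d\gg0$'' is used --- is the replacement of the $X$-dependent $\N_{Z|X}$ by the $X$-independent $\N_{Z|\pn}$, which raises the Serre twist by a positive multiple of $d$ and thereby makes Serre vanishing applicable; the only points needing care are the global exactness of the exterior-power sequences (immediate, being locally split) and the bookkeeping of cohomological degrees.
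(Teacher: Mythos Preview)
Your proposal is correct and follows essentially the same route as the paper: both use the normal-bundle sequence $0\to\N_{Z|X}\to\N_{Z|\pn}\to\mo_Z(d)\to0$, pass to exterior powers, tensor by $\bigwedge^{i-1}\T_Z$ with a suitable twist, and then step down the cohomological degree---applying Serre vanishing to the $\N_{Z|\pn}$ terms at each stage and to the base case $H^i\big(\bigwedge^{i-1}\T_Z((n-i+1)d-4)\big)$. Your upward induction on $k$ is just a relabelling of the paper's downward recursion on the exterior power, and your observation that only finitely many $X$-independent bundles appear (so one threshold for $d$ suffices) matches the paper's implicit use of the same fact.
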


 \begin{proof}
  Since $X$ is a hypersurface in $\pn$, $\N_{X|\pn}$ is isomorphic to $\mo_X(d)$.  
 Under this identification, we get the following normal short exact sequence, \[0 \to \N_{Z|X} \to \N_{Z|\pn} \to \mo_Z(d) \to 0.\]
 This gives rise to the following short exact sequence for $0 \le i \le n$:
  \[0 \to \bigwedge^{n-i}N_{Z|X} \to \bigwedge^{n-i}\N_{Z|\pn} \to \left( \bigwedge^{n-i-1}\N_{Z|X} \right) \otimes \mo_Z(d) \to 0.\] 
 Denote by $\mc{F}_{j,k}:=\bigwedge^{j}\T_Z \otimes \mo_X(k)$ for some $j,k \in \mb{Z}_{\ge 0}$. Since $Z$ and $X$ are smooth, $\mc{F}_{j,k}$ is $\mo_Z$-locally free and hence $\mo_Z$-flat. Tensoring the previous short exact sequence by $\mc{F}_{j,k}$ then
 gives us the following short exact sequence, 
 \[0 \to \mc{F}_{j,k} \otimes \bigwedge^{n-i}\N_{Z|X} \to \mc{F}_{j,k} \otimes \bigwedge^{n-i}\N_{Z|\pn} \to \mc{F}_{j,k} \otimes \bigwedge^{n-i-1}\N_{Z|X}(d) \to 0.\]
 By Serre's vanishing theorem, for $d \gg 0, l>0$ and $m \ge 1$, $H^m\left(\mc{F}_{j,ld-4} \otimes \bigwedge^{n-i}\N_{Z|\pn}\right)=0$, hence 
 \begin{equation}\label{sem5}
  H^m\left(\mc{F}_{j,ld-4} \otimes \bigwedge^{n-i-1}\N_{Z|X}(d)\right) \cong H^{m+1}\left(\mc{F}_{j,ld-4} \otimes \bigwedge^{n-i}\N_{Z|X}\right).
 \end{equation}
 Using Serre's vanishing theorem again for $d \gg 0$ and $i \ge 1$, $h^i\left(\bigwedge^{i-1}\T_Z((n-i+1)d-4)\right)=0$. Hence, 
 using the isomorphism (\ref{sem5}) recursively, we get for $j=i-1$, \[h^n\left(\bigwedge^{i-1} \T_Z \otimes \bigwedge^{n-i}\N_{Z|X}(d-4)\right)=h^{n-1}\left(\bigwedge^{i-1} \T_Z \otimes \bigwedge^{n-i-1}\N_{Z|X}(2d-4)\right)=...\]\[...=h^i\left(\bigwedge^{i-1} \T_Z((n-i+1)d-4)\right)=0.\]
 This proves the proposition.
 \end{proof}

\begin{proof}[Proof of Theorem \ref{sem1}]
 The existence of a smooth hypersurface in $\pn$ containing $Z$ for $d \gg 0$ follows from Theorem \ref{exi1}.
 It suffices to prove that there exists a 
 hypersurface $X$ in $\pn$ of degree $d \gg 0$ containing $Z$ such that
 the morphism from $H^{n-1}(\Omega_X^{n+1} \otimes \mo_Z)$ to $H^{n-1}(\N_{Z|X}^{\vee} \otimes \bigwedge^n \N_{Z|X} \otimes K_X)$, which is the dual to the semi-regularity map $\pi$ (see \ref{ph08}),
 is surjective. 
 
 Consider the short exact sequence, \[0 \to \T_Z \to \T_X \otimes \mo_Z \to \N_{Z|X} \to 0.\]
 Consider the associated filtration, 
 \[0=F^n \subset F^{n-1} \subset ... \subset F^0=\bigwedge^{n-1}(\T_X \otimes \mo_Z) \mbox{ satisfying }
  F^p/F^{p+1} \cong \bigwedge^p \T_Z \otimes \bigwedge^{n-1-p} \N_{Z|X}
 \]
 for all $p$.
 Taking $p=0$ we get the following short exact sequence
 \[0 \to F^1 \to \bigwedge^{n-1}(\T_X \otimes \mo_Z) \to \bigwedge^{n-1}\N_{Z|X} \to 0.\]
 Tensoring this by $K_X$ and looking at the associated long exact sequence, we get 
 \[ ...\to H^{n-1}(\Omega_X^{n+1} \otimes \mo_Z) \to H^{n-1}(\N_{Z|X}^{\vee} \otimes \bigwedge^n \N_{Z|X} \otimes K_X) \to H^n(F^1(d-4)) \to ...\]
 It therefore suffices to prove that $h^n(F^1(d-4))=0$.  
 
 We claim that it is sufficient to prove $h^n(F^{n-1}(d-4))=0$. Indeed, suppose $h^n(F^{n-1}(d-4))=0$.
 By Proposition \ref{sem2}, for any integer $2 \le i \le n-1$, we have \[ h^n\left(\bigwedge^{i-1} \T_Z \otimes \bigwedge^{n-i} \N_{Z|X}(d-4)\right)=0.\]
 Consider the following short exact sequence, where $2 \le p \le n-1$,  
 \begin{equation}\label{sem8}
0 \to F^p \to F^{p-1} \to \bigwedge^{p-1} \T_Z \otimes \bigwedge^{n-p} \N_{Z|X} \to 0
 \end{equation}
 Tensoring (\ref{sem8}) by $K_X \cong \mo_X(d-4)$ and considering the corresponding long exact sequence, we can conclude $h^n(F^{n-2}(d-4))=0$ (substitute $p=n-1$).
Recursively substituting  $p=n-2, n-3,...,2$ in (\ref{sem8}), we observe that $h^n(F^i(d-4))=0$ for $i=1,...,n-2$. In particular $h^n(F^1(d-4))=0$. Hence, it suffices to
prove $h^n(F^{n-1}(d-4))=0$.
  
 Note that, $F^{n-1} \cong \bigwedge^{n-1}\T_Z$ does not depend on the choice of $X$, hence independent of $d$. Therefore, by Serre's vanishing theorem, $h^n(F^{n-1}(d-4))=0$ for $d \gg 0$.
  This completes the proof of the theorem.
\end{proof}

\begin{rem}\label{sem3}
 Notations as in Theorem \ref{sem1}.
 We now note that the theorem implies a very special case of the variational Hodge conjecture. Indeed, consider a family $\pi:\mc{X} \to S$ of smooth degree $d$ hypersurfaces in $\mb{P}^{2n+1}$ with $X$ as a special fiber.
 Denote by $\gamma$ the cohomology class of $Z$ in $X$. Then, using \cite[Theorem $7.1$]{b1} notice that $\gamma$ remains a Hodge class if and only if $Z$ remains an algebraic variety as $X$ deforms along $S$.
\end{rem}

\bibliographystyle{alpha}
 \bibliography{researchbib}

\begin{thebibliography}{Sev44}

\bibitem[BF03]{fl}
R.-O. Buchweitz and H.~Flenner.
\newblock A semiregularity map for modules and applications to deformations.
\newblock {\em Compositio Mathematica}, 137(2):135--210, 2003.

\bibitem[Blo72]{b1}
S.~Bloch.
\newblock Semi-regularity and de-{R}ham cohomology.
\newblock {\em Inventiones Math.}, 17:51--66, 1972.

\bibitem[KA79]{kleim}
S.~L. Kleiman and A.~B. Altman.
\newblock Bertini's theorem for hypersurface sections containing a subscheme.
\newblock {\em Communications in Algebra}, 7(8):775--790, 1979.

\bibitem[KS59]{kod1}
K.~Kodaira and D.~C. Spencer.
\newblock A theorem of completeness of characteristic systems of complete
  continuous systems.
\newblock {\em Amer. J. Math.}, 81(2):477--500, 1959.

\bibitem[Sev44]{sev}
F.~Severi.
\newblock Sul teorema fondamentale dei sistemi continui di curve sopra una
  superficie algebrica.
\newblock {\em Ann. Mat. Pura Appl.}, 23(4):149--181, 1944.

\end{thebibliography}

  \vspace{2cm}
   Humboldt Universit\"{a}t Zu Berlin, Institut f\"{u}r Mathematik, Unter den Linden $6$, Berlin $10099$, Germany,\\ E-mail address: dan@mathematik.hu-berlin.de\\
   
   Freie Universität Berlin, FB Mathematik und Informatik, Arnimallee 3, 14195 Berlin,
Germany.\\
E-mail address: kaur@math.fu-berlin.de
 \end{document}